\newcommand{\grad}{\mathrm{grad}}
\newtheorem{theo}{Theorem}
\newtheorem{prop}{Proposition}
\theoremstyle{definition}
\newtheorem{definition}{Definition} 
\theoremstyle{definition}
\theoremstyle{definition}
\newcommand{\x}{\ensuremath{x}}
\newcommand{\uv}{\ensuremath{u}}
\newcommand{\vv}{\ensuremath{v}}
\newcommand{\p}{\ensuremath{p}}
\newcommand{\kk}{\ensuremath{k}}
\newcommand{\h}{\ensuremath{{h}}}
\newcommand{\dd}{\ensuremath{T}}
\newcommand{\e}{\mathrm{e}}
\newcommand{\ce}{\ensuremath{c}}
\newcommand{\s}{\ensuremath{s}}
\newcommand{\cc}{\mathsf{c}}
\newcommand{\sh}{\ensuremath{\widetilde{s}}}
\numberwithin{equation}{section}
\DeclareMathAlphabet{\mathcal}{OMS}{cmsy}{m}{n}
\title{Energy preserving methods on Riemannian manifolds}
\author{Elena Celledoni$^*$, Sølve Eidnes$^*$, Brynjulf Owren$^*$, Torbjørn Ringholm%
  \thanks{Department of Mathematical Sciences, Norwegian University of Science and Technology, N–7491 Trondheim\\
  Elena Celledoni: \texttt{elena.celledoni@ntnu.no}; Sølve Eidnes: \texttt{solve.eidnes@ntnu.no}; Brynjulf Owren: \texttt{brynjulf.owren@ntnu.no}; Torbjørn Ringholm: \texttt{torbjorn.ringholm@ntnu.no}\\
  This work was supported by the European Union's Horizon 2020 research and innovation programme under the Marie Skłodowska-Curie grant agreement No. 691070.}}
\begin{document}
\maketitle

\begin{abstract}
The energy preserving discrete gradient methods are generalized to finite-dimensional Riemannian manifolds by definition of a discrete approximation to the Riemannian gradient, a retraction, and a coordinate center function. The resulting schemes are intrinsic and do not depend on a particular choice of coordinates, nor on embedding of the manifold in a Euclidean space. Generalizations of well-known discrete gradient methods, such as the average vector field method and the Itoh--Abe method are obtained.
It is shown how methods of higher order can be constructed via a collocation-like approach. Local and global error bounds are derived in terms of the Riemannian distance function and the Levi-Civita connection. Some numerical results on spin system problems are presented.
 
\textbf{Keywords}: Geometric integration, discrete gradients, Riemannian manifolds, numerical analysis.

\textbf{Classification}: 37K05, 53B99, 65L05, 82-08
\end{abstract}

\section{Introduction}
A first integral of an ordinary differential equation (ODE) is a scalar-valued function on the phase space of the ODE that is preserved along solutions. The potential benefit of using numerical methods that preserve one or more such invariants is well-documented, and several energy-preserving methods have been developed in recent years. Among these are the discrete gradient methods, which were introduced for use in Euclidean spaces in \cite{Gonzalez96}, see also \cite{McLachlan99}. These methods are based on the idea of expressing the ODE using a skew-symmetric operator and the gradient of the first integral, and then creating a discrete counterpart to this in such a way that the numerical scheme preserves the energy.

For manifolds in general, one can use the same schemes expressed in local coordinates. A drawback is that the numerical approximation will typically depend on the particular choice of coordinates and also on the strategy used for transition between coordinate charts. Another alternative is to use a global embedding of the manifold into a larger Euclidean space, but then it typically happens that the numerical solution deviates from the manifold. Even if the situation can be amended by using projection, it may not be desirable that the computed approximation depends on the particular embedding chosen.
Crouch and Grossmann \cite{Crouch93} and Munthe-Kaas \cite{Munthe95, Munthe98} introduced different ways of extending existing Runge--Kutta methods to a large class of differentiable manifolds. Both these approaches are generally classified as Lie group integrators, see \cite{Iserles00} or the more recent \cite{Celledoni14} for a survey of this class of methods. They can also both be formulated abstractly by means of a post-Lie structure which consists of a Lie algebra with a flat connection of constant torsion, see e.g. \cite{munthe-kaas13opl}. In the present paper we shall state the methods in a slightly different context, using the notion of a Riemannian manifold. It is then natural to make use of the Levi-Civita connection, which in contrast to the post-Lie setting is torsion-free, and which in general has a non-zero curvature. For our purposes it is also an advantage that the Riemannian metric provides an intrinsic definition of the gradient. Taking an approach more in line with this, Leimkuhler and Patrick \cite{Leimkuhler96} considered mechanical systems on the cotangent bundle of a Riemannian manifold and succeeded in generalising the classical leap-frog scheme to a symplectic integrator on Riemannian manifolds.

Some classical numerical methods in Euclidean spaces preserve certain classes of invariants; for instance, symplectic Runge--Kutta methods preserve all quadratic invariants. This can be useful when there is a natural way of embedding a manifold into a linear space by using constraints that are expressed by means of such invariants. An example is the 2-sphere which can be embedded in $\mathbb{R}^3$ by adding the constraint that these vectors should have unit length. The classical midpoint rule will automatically ensure that the numerical approximations remain on the sphere as it preserves all quadratic invariants. In general, however, the invariants preserved by these methods are expressed in terms of coordinates. Hence the preservation property of the method may be  
lost under coordinate changes if the invariant is no longer quadratic.
In \cite{Celledoni13}, a generalization of the discrete gradient method to differential equations on Lie groups and a broad class of manifolds was presented. Here we develop this further by introducing a Riemannian structure that can be used to provide an intrinsic definition of the gradient as well as a means to measure numerical errors.

The structure of this paper is as follows: In section 2, we formulate the problem to be solved and introduce discrete Riemannian gradient methods, as well as presenting some particular examples with special attention to a generalization of the Itoh--Abe discrete gradient. We also briefly discuss the Euclidean setting as a special choice of manifold and show how the standard discrete gradient methods are recovered in this case.
In the third section, we consider higher order energy preserving methods based on generalization of a collocation strategy introduced by Hairer  \cite{Hairer09} to Riemannian manifolds. We present some error analysis in 
section~4, and show numerical results in section~5, where the methods are applied to spin system problems.

\section{Energy preservation on Riemannian manifolds}
Consider an initial value problem on the finite-dimensional Riemannian manifold $(M,g)$,
\begin{align}
\dot{\uv} = F(\uv),\quad \uv(0)=\uv^0\in M.
\label{eq:ODE}
\end{align}
We denote by $\mathcal{F}(M)$ the space of smooth functions on $M$. The set of smooth vector fields and differential one-forms are 
denoted $\Gamma(TM)$ and $\Gamma(T^*M)$ respectively, and for the duality pairing between these two spaces we use the
angle brackets $\langle\cdot,\cdot\rangle$.

A first integral associated to a vector field $F\in\Gamma(TM)$ is a function $H \in\mathcal{F}(M)$ such that
$\langle \mathrm{d}H,F\rangle$ vanishes identically on $M$. First integrals are preserved along solutions of (\ref{eq:ODE}), 
\begin{align*}
\dfrac{\mathrm{d}}{\mathrm{d}t} H(\uv(t)) = \left\langle \mathrm{d}H(\uv(t)),\dot{\uv}(t) \right\rangle = \left\langle \mathrm{d}H(\uv(t)),F(\uv(t)) \right\rangle = 0.
\end{align*}

\subsection{Preliminaries}
The fact that a vector field $F$ has a first integral $H$ is closely related to the existence of a tensor field 
$\Omega\in\Gamma(TM\otimes T^*M)=: \Gamma(\mathcal{T}_1^1M)$, skew-symmetric with respect to the metric $g$, such that
\begin{align}
F(\uv) = \Omega(\uv) \, \grad H(\uv),
\label{eq:ODE_bivectorform2}
\end{align}
where $\grad H\in\Gamma(TM)$ is the Riemannian gradient, the unique vector field satisfying $\langle \mathrm{d}H, \cdot\rangle=g(\grad H,\cdot)$.
Any ODE \eqref{eq:ODE} where $F$ is of this form preserves $H$, since
\begin{align*}
\frac{\mathrm{d}}{\mathrm{d}t}H(\uv) &= \left\langle \mathrm{d}H(\uv), \dot{\uv} \right\rangle
= \left\langle \mathrm{d}H(\uv),  \Omega \, \grad H(\uv)\right\rangle
= g(\grad H(\uv),  \Omega \, \grad H(\uv))=0.
\end{align*}
A converse result is detailed in the following proposition.
\begin{prop}
Any system (\ref{eq:ODE}) with a first integral $H$ can be written with an $F$ of the form (\ref{eq:ODE_bivectorform2}).  
The skew tensor field $\Omega$ can be chosen so as to be bounded near every nondegenerate critical point of $H$.
\end{prop}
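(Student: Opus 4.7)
The plan is to imitate the classical Euclidean ``AVF'' construction, using the Riemannian metric to identify vectors with covectors. Let $F^\flat := g(F,\cdot)\in\Gamma(T^*M)$ be the one-form metrically dual to $F$ and set
\begin{align*}
\Omega \;:=\; \frac{F\otimes \mathrm{d}H \;-\; \grad H\otimes F^\flat}{g(\grad H,\grad H)},
\end{align*}
a globally defined smooth $(1,1)$ tensor field on the open set $\{\grad H\neq 0\}\subset M$.

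I would then verify the two required properties. Skew-symmetry with respect to $g$ follows from
\begin{align*}
g(\Omega X,Y)\;=\;\frac{g(F,Y)\,g(\grad H,X)-g(\grad H,Y)\,g(F,X)}{g(\grad H,\grad H)},
\end{align*}
which is manifestly antisymmetric in $X$ and $Y$. The identity $\Omega\,\grad H=F$ is immediate from $\mathrm{d}H(\grad H)=g(\grad H,\grad H)$ together with the first-integral condition $g(F,\grad H)=\langle \mathrm{d}H,F\rangle=0$, which causes the second term in the numerator to drop out.

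The remaining and most substantial task is boundedness of $\Omega$ near any nondegenerate critical point $p$ of $H$. The key observation is that necessarily $F(p)=0$: applying the Levi-Civita connection to the identity $\langle \mathrm{d}H,F\rangle\equiv 0$ at $p$ and using $\mathrm{d}H(p)=0$ yields $(\nabla\mathrm{d}H)_p(X_p,F(p))=0$ for every $X_p\in T_pM$; since the Hessian $(\nabla\mathrm{d}H)_p$ is non-singular by nondegeneracy, $F(p)=0$ follows. Both $F$ and $\grad H$ are smooth and vanish at $p$, and nondegeneracy further guarantees that $|\grad H|$ is bounded below by a positive multiple of the Riemannian distance $d(\cdot,p)$ in a neighbourhood of $p$. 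Hence the numerator of $\Omega$ vanishes at order at least two, the denominator at order exactly two, and $\Omega$ stays bounded on a punctured neighbourhood of $p$; any convenient value may be assigned at $p$ itself.

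The only real obstacle is establishing the vanishing of $F$ at nondegenerate critical points; once that is in hand, boundedness reduces to a routine order-of-vanishing comparison and everything else is a metric-dual repackaging of the Euclidean construction.
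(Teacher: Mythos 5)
Your $\Omega$ is exactly the paper's choice \eqref{eq:bivector} written in tensor rather than operator notation, and your verifications of skew-symmetry and of $\Omega\,\grad H=F$ match the paper's. The only difference is that for boundedness near nondegenerate critical points the paper simply cites \cite{McLachlan99}, whereas you correctly reconstruct that argument (showing $F(p)=0$ via the nondegenerate Hessian and then comparing orders of vanishing), so the proposal is correct and follows essentially the same route.
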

\begin{proof}
Similar to the proof of Proposition $2.1$ in \cite{McLachlan99}, we can write an explicit expression for a possible choice of $\Omega$,
\begin{align}
\Omega y &= \frac{g(\grad H,y)\,F-g(F,y)\,\grad H}{g(\grad H,\grad H)}.
\label{eq:bivector}
\end{align}
Clearly, $g(y,\Omega y)=0$ for all $y$.
Since $H$ is a first integral, $g(F,\grad H)=\langle \mathrm{d}H,F\rangle=0$, so $\Omega\,\grad H=F$. For a proof that $\Omega$ is bounded near nondegenerate critical points, see \cite{McLachlan99}.
\end{proof}
\noindent In fact, such a tensor field $\Omega$ often arises naturally from a two-form $\omega$ through 
 $\Omega y = \omega(\cdot,y)^\sharp$. A well-known example is when $\omega$ is a symplectic two-form. Note that $\Omega$ is not necessarily unique.

Retractions, viewed as maps from $TM$ to $M$, will play an important role in the methods we discuss here. 
Their formal definition can be found e.g. in \cite{Adler02}:
\begin{definition}
Let $\phi$ be a smooth map $\phi: TM \rightarrow M$ and let $\phi_\p$ denote the restriction of $\phi$ to $T_\p M$, with $0_\p$ being the zero-vector in $T_\p M$. Then $\phi$ is a \textit{retraction} if it satisfies the conditions
\begin{enumerate}
\item $\phi_\p$ is defined in an open ball $B_{r_\p}(0_\p) \subset T_\p M$ of radius $r_\p$ about $0_\p$,
\item $\phi_\p(\x) = \p$ \text{ if and only if } $\x=0_\p$,
\item $\dd\phi_\p\big|_{0_\p} = \mathrm{Id}_{T_\p M}$.
\end{enumerate}
\end{definition}
A generic example of a retraction on $(M,g)$ is obtained via the Riemannian exponential, setting $\phi_\p(\x)=\exp_\p(\x)$, i.e. following along the geodesic emanating from $\p$ in the direction $\x$.

\subsection{The discrete Riemannian gradient method}

We adapt the discrete gradients in Euclidean space to discrete Riemannian gradients (DRG) on $(M,g)$ by means of a retraction map $\phi$ and a center point function $c$.

\begin{definition}
A discrete Riemannian gradient is a triple $(\overline\grad, \phi, c)$\footnote{To avoid cluttered notation we will just write  $\overline{\grad}$ for the triple $(\overline\grad, \phi, c)$ in the sequel.}
 where
\begin{enumerate}
\item $c: M \times M \rightarrow M$ is a continuous map such that $c(\uv,\uv)=\uv$ for all $\uv \in M$,
\item $\overline{\grad}: \mathcal{F}(M)\rightarrow \Gamma(c^*TM)$,
\item $\phi: TM \rightarrow M$ is a retraction,
\end{enumerate}
such that for all $H\in\mathcal{F}(M)$, $u\in M$, $v\in M$, $c=c(u,v)\in M$,
\begin{align}
H(\vv) - H(\uv) &= g ( \overline{\grad}H(\uv,\vv),\phi_c^{-1}(\vv) - \phi_c^{-1}(\uv) ), \label{eq:discdiff1}\\
\overline{\grad}H(\uv,\uv) &=  \grad H(\uv). \label{eq:discdiff2}
\end{align}
\end{definition}
The DRG $\overline{\grad} H$ is a continuous section of the pullback bundle $c^*TM$, meaning that
${\pi\circ \overline{\grad} H = c}$, where $\pi:TM\rightarrow M$ is the natural projection.
We also need to define an approximation to be used for the tensor field $\Omega\in \Gamma(\mathcal{T}_1^1M)$. To this end  we let $\overline{\Omega}\in \Gamma(c^*\mathcal{T}_1^1M)$ be a continuous skew-symmetric tensor field such that
$$
\overline{\Omega}(\uv,\uv)=\Omega(\uv)\quad \forall\uv\in M.
$$
Inspired by \cite{Celledoni13,Celledoni18}, we propose the scheme
\begin{align}
\uv^{\kk+1} &= \phi_{c^k}(W(\uv^\kk,\uv^{\kk+1})) \label{eq:ddscheme1},\quad c^k=c(\uv^k,\uv^{k+1})\\
 W(\uv^\kk,\uv^{\kk+1}) &= \phi_{c^k}^{-1}(\uv^\kk) + h \, \overline{\Omega}(\uv^\kk,\uv^{\kk+1}) \, \overline{\grad} H(\uv^\kk,\uv^{\kk+1}),
 \label{eq:ddscheme2}
\end{align}
where $h$ is the step size. The scheme (\ref{eq:ddscheme1})--(\ref{eq:ddscheme2}) preserves the invariant $H$, since
\begin{align*}
H(\uv^{\kk+1}) - H(\uv^\kk) &= g ( \overline{\grad}H(\uv^\kk,\uv^{\kk+1}),\phi_{c^k}^{-1}(\uv^{\kk+1}) - \phi_{c^k}^{-1}(\uv^\kk) )\\
&= g ( \overline{\grad}H(\uv^\kk,\uv^{\kk+1}), h \, \overline{\Omega}(\uv^\kk,\uv^{\kk+1}) \, \overline{\grad} H(\uv^\kk,\uv^{\kk+1}))=0.
\end{align*}
Here and in the following we adopt the shorthand notation $c = c(u,v)$ as long as it is obvious what the arguments of $c$ are.

The Average Vector Field (AVF) method has been studied extensively in the literature; some early references are \cite{Harten83,McLachlan99,quispel08anc}. This is a discrete gradient method, and we propose a corresponding DRG satisfying  (\ref{eq:discdiff1})-(\ref{eq:discdiff2}) as follows:
\begin{equation}
\overline{\grad}_\text{AVF}H(\uv,\vv) = \int_0^1 (\dd_{\gamma_{\xi}}\phi_c)^\text{T} \, \grad H(\phi_c(\gamma_\xi)) \, \mathrm{d}\xi, \quad \gamma_\xi = (1-\xi)\phi_c^{-1}(\uv) + \xi \phi_c^{-1}(\vv),
\label{eq:AVF}
\end{equation}
where $(\dd_{x}\phi_c)^\text{T}: T_{\phi_c(x)}M\rightarrow T_xM$ is the unique operator satisfying
\begin{equation*}
g((\dd_{x}\phi_c)^\text{T}a,b)=g(a,\dd_{x}\phi_c\, b), \quad \forall x,b\in T_cM, \quad a\in T_{\phi_c(x)}M.
\end{equation*}
Furthermore, we have the generalization of Gonzalez' midpoint discrete gradient \cite{Gonzalez96},
\begin{equation}
\overline{\grad}_\text{MP} H(\uv,\vv) = \grad H(c(\uv,\vv)) + \frac{H(\vv)-H(\uv) - g ( \grad H(c(\uv,\vv)),\eta)}{g (\eta,\eta)}\eta
\label{eq:Gonzalez}
\end{equation}
where $\eta = \phi_c^{-1}(\vv) - \phi_c^{-1}(\uv)$.

Note that both these DRGs involve the gradient of the first integral.   This may be a disadvantage if $H$ is non-smooth or if its gradient is expensive to compute.
 Also, the implicit nature of the schemes requires the solution of an $n$-dimensional nonlinear system of equations at each time step. An alternative is to consider the Itoh--Abe discrete gradient \cite{Itoh88}, also called the coordinate increment discrete gradient \cite{McLachlan99}, which in certain cases requires only the solution of $n$ decoupled scalar equations. We now present a generalization of the Itoh--Abe discrete gradient to finite-dimensional Riemannian manifolds.

\subsection{Itoh--Abe discrete Riemannian gradient}

\begin{definition}
For any tangent space $T_c M$ one can choose a basis $\{E_1,...,E_n \}$ composed of tangent vectors $E_i$, $i=1,...,n$, orthonormal with respect to the Riemannian metric $g$.
Then, given $\uv, \vv \in M$, there exists a unique $\left\lbrace \alpha_i \right\rbrace_{i=1}^n$ so that
\begin{align*}
\phi_c^{-1}(\vv) - \phi_c^{-1}(\uv) = \sum \limits_{i=1}^n \alpha_i E_i.
\end{align*}
The \textit{Itoh--Abe DRG} of the first integral $H$ is then given by
\begin{align}
\overline{\grad}_\text{IA} H(\uv,\vv)& = \sum \limits_{j=1}^{n} a_j E_j,
\label{eq:ItohAbe}
\end{align}
where
\begin{align*}
a_j &=
  \begin{cases}
    \dfrac{H(w_j) - H(w_{j-1})}{\alpha_j}       & \quad \text{if } \alpha_j \neq 0,\\
    g(\grad H(w_{j-1}), \dd \phi_c(\eta_{j-1}) E_j)  & \quad \text{if } \alpha_j = 0,
  \end{cases}\\
w_j &= \phi_c(\eta_j), \quad \eta_j = \phi_c^{-1}(\uv) + \displaystyle{\sum_{i=1}^j} \alpha_i E_i.
\end{align*}
\end{definition}
We refer to \cite{Celledoni18} for proof that this is indeed a DRG satisfying (\ref{eq:discdiff1})-(\ref{eq:discdiff2}).

\subsection{Euclidean setting} \label{subseq:EuclideanSetting}
Let $M=V$ be an $\mathbb{R}$-linear space, and let $g$ be the Euclidean inner product, $g(x,y)=x^\mathrm{T} y$.
The operator $\Omega$ is a solution dependent skew-symmetric $n \times n$ matrix $\Omega(\uv)$.  For any $u\in V$, we have 
$T_uV\equiv V$.
The retraction $\phi: V \rightarrow V$ is defined as $\phi_\p(\x) = \p + \x$, the Riemannian exponential on $V$, so that $\phi^{-1}_c(\vv) - \phi^{-1}_c(\uv) = \vv - \uv$. The gradient $\grad H$ is an $n$-vector whose $i$th component is $\frac{\partial H}{\partial \uv_i}$, and the definition of the discrete Riemannian gradient coincides with the standard discrete gradient, since \eqref{eq:discdiff1} now reads
\begin{equation*}
H(\vv)-H(\uv) = \overline{\grad}(\uv,\vv)^\mathrm{T} (\vv-\uv).
\end{equation*}
Furthermore, (\ref{eq:ddscheme1})-(\ref{eq:ddscheme2}) simply becomes the discrete gradient method introduced in \cite{Gonzalez96}, given by the scheme
\begin{align}
 \uv^{k+1}-\uv^k &= h\overline{\Omega}(\uv^k,\uv^{k+1}) \, \overline{\grad} H(\uv^k,\uv^{k+1}),
 \label{eq:dgschemeeuclid}
\end{align}
where $\overline{\Omega}$ is a skew-symmetric matrix approximating $\Omega$. 
Typical choices are $\overline{\Omega}(\uv^k,\uv^{k+1}) = \Omega(\uv^k)$, or $\overline{\Omega}(\uv^k,\uv^{k+1}) = \Omega((\uv^{k+1}+\uv^k)/2)$ if one seeks a symmetric method.

The DRGs (\ref{eq:AVF}) and (\ref{eq:Gonzalez}) become the standard AVF and midpoint discrete gradients in this case. For the Itoh--Abe DRG, the practical choice for the orthogonal basis would be the set of unit vectors, $\{e_1,...,e_n \}$, so  that $\alpha_i = \vv_i - \uv_i$, and we get (\ref{eq:ItohAbe}) with
\begin{align*}
a_j &=
  \begin{cases}
    \dfrac{H(w_j) - H(w_{j-1})}{\vv_j-\uv_j}       & \quad \text{if } \uv_j \neq \vv_j,\\
    \frac{\partial H}{\partial \uv_j}(w_{j-1})  & \quad \text{if } \uv_j = \vv_j,
  \end{cases}\\
w_j &= \sum\nolimits_{i=1}^j \vv_i e_i + \sum\nolimits_{i=j+1}^n \uv_i e_i,
\end{align*}
which is a reformulation of the Itoh--Abe discrete gradient as it is given in \cite{Itoh88}, \cite{McLachlan99} and the literature otherwise.

\section{Methods of higher order} \label{sec:collocation}
In the Euclidean setting, a strategy to obtain energy preserving methods of higher order was presented in \cite{brugnano10hbv} and later in \cite{Hairer09}, see also \cite{Cohen11}. This technique is generalized to a Lie group setting in \cite{Celledoni13}. We will here formulate these methods in the context of Riemannian manifolds.

\subsection{Energy-preserving collocation-like methods on Riemannian manifolds}
Let $\cc_1,...,\cc_s$ be distinct real numbers. Consider the Lagrange basis polynomials,
\begin{equation} \label{eq:elldef}
l_i(\xi) = \prod_{j=1,j\neq i}^s \frac{\xi-\cc_j}{\cc_i-\cc_j},\quad\mathrm{and}\,\, \mathrm{let}\quad b_i := \int_0^1 l_i(\xi) \,d\xi.
\end{equation}
We assume that $\cc_1,\ldots,\cc_s$ are such that $b_i \neq 0$ for all $i$. 
A step of the energy-preserving collocation-like method, starting at $u^0 \in M$, is defined via a polynomial ${\sigma: \mathbb{R} \rightarrow T_cM}$ of degree $s$ satisfying
\begin{align}
&\sigma(0) = \phi_\ce^{-1}(\uv^0), \label{eq:collman1} \\ 
&\frac{d}{d\xi}\sigma(\xi h)\Big\rvert_{\xi=\cc_j} = \dd_{U_j}\phi_\ce^{-1}\left(\Omega_j \grad_j H \right), \quad U_j:=\phi_\ce\left(\sigma(\cc_j h)\right) \label{eq:collman2}\\
&u^1:=\phi_\ce\left(\sigma(h)\right), \label{eq:collman3}
\end{align}
where 
\begin{equation*}
\grad_j H := \int_0^1 \frac{l_j(\xi)}{b_j} \left(\dd_{U_j}\phi_\ce^{-1}\right)^\text{T} (\dd_{\sigma(\xi h)}\phi_\ce)^\text{T} \, \grad H\left(\phi_\ce(\sigma(\xi h))\right) \, \mathrm{d}\xi, \quad \text{and} \quad 
\Omega_j := \Omega({U_j}).
\end{equation*}
Notice that with $s=1$ and independently on the choice of $\cc_1$, we reproduce the DRG method \eqref{eq:ddscheme1}-\eqref{eq:ddscheme2} with the AVF DRG \eqref{eq:AVF}.

Using Lagrange interpolation and \eqref{eq:collman2}, the derivative of $\sigma(\xi h)$ at every point $\xi h$ is
\begin{equation}
\frac{d}{d\xi}\sigma(\xi h)= \sum_{j=1}^s l_j(\xi) \dd_{U_j}\phi_\ce^{-1}\left(\Omega_j \grad_j H \right),
\label{eq:colpolman}
\end{equation}
from which by integrating we get
\begin{equation*}
\sigma(\tau h)= \phi_\ce^{-1}(\uv_0) + h \sum_{j=1}^s \int_0^{\tau} l_j(\xi) \, \mathrm{d}\xi \, \dd_{U_j}\phi_\ce^{-1}\left(\Omega_j \grad_j H \right).
\end{equation*}

The defined method is energy preserving, which we see by using 
\begin{equation*}
\frac{\mathrm{d}}{\mathrm{d}\xi}\left( \phi_\ce(\sigma(\xi h))\right) = \, \dd_{\sigma(\xi h)} \phi_\ce \left( \frac{\mathrm{d}}{\mathrm{d}\xi} \sigma(\xi h)\right),
\end{equation*}
and (\ref{eq:colpolman}) to get
\begin{align*}
H(\uv^1&) - H(\uv^0) = \int_{0}^{1} g\left( \grad H\left(\phi_\ce(\sigma(\xi h))\right), \, \frac{\mathrm{d}}{\mathrm{d}\xi} \phi_\ce(\sigma(\xi h)) \right) \, \mathrm{d}\xi\\
&= \int_{0}^{1} g\left( \grad H\left(\phi_\ce(\sigma(\xi h))\right), \, \dd_{\sigma(\xi h)} \phi_\ce \left( \sum_{j=1}^s l_j(\xi) \, \dd_{U_j}\phi_\ce^{-1}\left(\Omega_j \grad_j H \right) \right) \right) \, \mathrm{d}\xi \\
&= \int_{0}^{1} g\left( \left(\dd_{\sigma(\xi h)} \phi_\ce\right)^\text{T} \grad H\left(\phi_\ce(\sigma(\xi h))\right), \, \sum_{j=1}^s l_j(\xi) \, \dd_{U_j}\phi_\ce^{-1}\left(\Omega_j \grad_j H  \right) \right) \, \mathrm{d}\xi \\
&= \sum_{j=1}^s b_j g\left( \int_{0}^{1} \frac{l_j(\xi)}{b_j} \left(\dd_{U_j}\phi_\ce^{-1}\right)^\text{T} \left(\dd_{\sigma(\xi h)} \phi_\ce\right)^\text{T} \grad H\left(\phi_\ce(\sigma(\xi h))\right) \mathrm{d} \xi, \, \Omega_j \grad_j H \right) \\
&= \sum_{j=1}^s b_j g\left(  \grad_j H, \, \Omega_j \,\grad_j H \right) = 0,
\end{align*}
and hence repeated use of (\ref{eq:collman1})-(\ref{eq:collman3}) ensures
$H(\uv^k) = H(\uv^0)$ for all $k \in \mathbb{N}$.

\subsection{Higher order extensions of the Itoh--Abe DRG method}
From the Itoh--Abe DRG one  can get a new DRG, also satisfying \eqref{eq:discdiff1}, by
\begin{align}
\overline{\grad}_\text{SIA} H(\uv,\vv)& = \frac{1}{2}\left(\overline{\grad}_\text{IA} H(\uv,\vv)+\overline{\grad}_\text{IA} H(\vv, \uv)\right).
\label{eq:SymItohAbe}
\end{align}
We call this the \textit{symmetrized Itoh--Abe DRG}. Note that we need the base point $c$ to be the same in the evaluation of $\overline{\grad}_\text{IA} H(\uv,\vv)$ and $\overline{\grad}_\text{IA} H(\vv,\uv)$. When $c(\uv,\vv) = c(\vv,\uv)$ and $\overline{\Omega}_{(u,v)} = \overline{\Omega}_{(v,u)}$, we get a symmetric DRG method \eqref{eq:ddscheme1}-\eqref{eq:ddscheme2}, which is therefore of second order.

Alternatively, one can get a symmetric $2$-stage method by a composition of the Itoh--Abe DRG method and its adjoint. 
Furthermore, one can get energy preserving methods of any order using a composition strategy. 
To ensure symmetry of an $s$-stage composition method, one needs $c_i(\uv,\vv) = c_{s+1-i}(\vv,\uv)$ for different center points $c_i$ belonging to each stage and, similarly, $\overline{\Omega}_i(\uv,\vv) = \overline{\Omega}_{s+1-i}(\vv,\uv)$.

\section{Error analysis}
\subsection{Local error}
In this section, $\varphi_t(u)$ is the $t$-flow of the ODE vector field $F$. 
The  most standard discrete gradient methods  have a low or moderate order of convergence, and that is also the case for the DRG methods unless special care is taken in designing $\overline{\Omega}$ and $\overline{\grad} H$. 
We shall not pursue this approach here, but refer to the collocation-like methods if high order of accuracy is required. 
We shall see, however, that the methods designed here are consistent and can be made symmetric.
Analysis of the local error can be done in local coordinates, assuming that the step size is always chosen sufficiently small, so that within a fixed step, 
$\uv^k, \uv^{k+1}, c(u^k,u^{k+1})$ and the exact local solution $\uv(t_{k+1})$ all belong to the same given coordinate chart. From the definition
\eqref{eq:ddscheme1}-\eqref{eq:ddscheme2} it follows immediately that the representation of $\uv^{k+1}(h)$ satisfies $\uv^{k+1}(0)=u^k$ and
$\frac{d}{dh} u^{k+1}(0)=F(u^k)$. Then by equivalence of local coordinate norms and the Riemannian distance, we may conclude that the local error in DRG methods satisfies 
$$
     d(u^{k+1},\varphi_h(u^k)) \leq C h^2.
$$
Similar to what was also observed in \cite{Celledoni13}, the DRG methods \eqref{eq:ddscheme1}-\eqref{eq:ddscheme2} are symmetric whenever
$\overline{\grad} H(u,v)=\overline{\grad} H(v,u)$,  $\overline{\Omega}(u,v)=\overline{\Omega}(v,u)$, and $c(u,v)=c(v,u)$ for all $u,v\in M$. In that case we obtain an error bound for the local error of the form
$d(u^{k+1},\varphi_h(u^k)) \leq C h^3$.

The collocation-like methods of section~\ref{sec:collocation} have associated nodes $\{\cc_i\}_{i=1}^s$ and weights $\{b_i\}_{i=1}^s$ defined by
\eqref{eq:elldef}. The order of the local error depends on the accuracy of the underlying quadrature formula given by these nodes and weights.
The following result is a simple consequence of Theorem~4.3 in \cite{Cohen11}.

\begin{theo}
Let $\psi_h$ be
the method defined by \eqref{eq:collman1}-\eqref{eq:collman3}. The order of the local error is at least
$$
p=\min(r,2r-2s+2)
$$
where $r$ is the largest integer such that $\sum_{i=1}^s b_i c_i^{q-1}=\frac{1}{q}$ for all $1\leq q\leq r$. This means that there are positive constants $C$
and $h_0$ such that 
$$
     d(\psi_h(\uv),\varphi_h(\uv))\leq C\,h^{p+1}\quad \text{for}\ h<h_0,\ u\in M.
$$
\end{theo}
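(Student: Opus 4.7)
The plan is to reduce the claim to the Euclidean collocation result of Cohen and Hairer \cite{Cohen11} by pulling the problem back through the retraction to a linear coordinate chart, then transferring the bound back to $M$.

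First, fix $\uv^0\in M$. Using the normalisation $\dd\phi_\ce\big|_{0}=\mathrm{Id}$ together with $c(\uv,\uv)=\uv$, the implicit function theorem guarantees that for all sufficiently small $h$ the system (\ref{eq:collman1})--(\ref{eq:collman3}) admits a unique solution, and that $\uv^0$, $\uv^1$, the centre $\ce=c(\uv^0,\uv^1)$ and the exact flow $\varphi_h(\uv^0)$ all lie in a single open neighbourhood $V\subset M$ on which $\phi_\ce$ is a diffeomorphism onto its image.

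Second, I would pass to the chart $\phi_\ce^{-1}:V\to T_\ce M$. The ODE becomes $\dot y=\tilde F(y):=\dd\phi_\ce^{-1}\bigl(F(\phi_\ce(y))\bigr)$ on an open subset of the linear space $T_\ce M$, with preserved energy $\tilde H:=H\circ\phi_\ce$. Setting $\tilde\Omega(y):=\dd\phi_\ce^{-1}\,\Omega(\phi_\ce(y))\,(\dd\phi_\ce^{-1})^{\text{T}}$, the factors $(\dd_{U_j}\phi_\ce^{-1})^{\text{T}}$ and $(\dd_{\sigma(\xi h)}\phi_\ce)^{\text{T}}$ appearing in $\grad_j H$ are exactly those relating $\grad H$ on $M$ to the Euclidean gradient of $\tilde H$ on $T_\ce M$. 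With these identifications, the polynomial $\sigma$ defined by (\ref{eq:collman1})--(\ref{eq:collman2}) is precisely the energy-preserving collocation polynomial of \cite{Hairer09,Cohen11} applied to the pulled-back system in skew-gradient form, with the same nodes $\cc_i$ and weights $b_i$. Theorem~4.3 of \cite{Cohen11} then gives, in any norm on $T_\ce M$,
\[
\bigl\|\sigma(h)-\phi_\ce^{-1}(\varphi_h(\uv^0))\bigr\|\leq \tilde C\,h^{p+1},\qquad p=\min(r,2r-2s+2).
\]

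Third, transfer this estimate back to $M$. Since $\phi_\ce$ is a smooth diffeomorphism on $V$ with $\dd\phi_\ce\big|_{0}=\mathrm{Id}$, the Riemannian distance on $M$ near $\ce$ is equivalent to any norm on $T_\ce M$ transported through $\phi_\ce^{-1}$, with constants uniform as $\uv^0$ ranges over a compact subset of $M$. Combined with the previous bound this gives $d(\psi_h(\uv^0),\varphi_h(\uv^0))\leq C\,h^{p+1}$, as required.

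The main technical difficulty I anticipate is that the chart used in the reduction itself depends on the unknown $\uv^1$, through $\ce=c(\uv^0,\uv^1)$. I would resolve this by noting that for $h<h_0$ sufficiently small all relevant points lie inside a single fixed chart centred at $\uv^0$ (for instance the one obtained from $\phi_{\uv^0}^{-1}$), so the Cohen--Hairer comparison can be carried out in that fixed chart; the variation of $\ce$ with $h$ and $\uv^1$ then enters only as smooth $O(h)$ perturbations of the coefficients of the collocation-like scheme, which do not affect the leading-order local error.
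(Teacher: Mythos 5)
Your proposal follows essentially the same route as the paper's proof: pull the system back through $\phi_\ce^{-1}$ to the linear space $T_\ce M$, rewrite it in skew-gradient form with $\widetilde H = H\circ\phi_\ce$ and $\widetilde\Omega = \dd\phi_\ce^{-1}\,\Omega\,(\dd\phi_\ce^{-1})^{\text{T}}$, observe that the scheme coincides locally with the Cohen--Hairer collocation method so that their Theorem~4.3 applies, and transfer the bound back via the equivalence of the Riemannian distance and local coordinate norms. Your extra remark about the chart depending on the unknown $\uv^1$ through $\ce$ is a reasonable refinement of a point the paper passes over silently, but it does not change the argument.
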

\begin{proof}
Choose $h$ small enough such that 
the solution can be represented in the form $u(h\xi)=\phi_c(\gamma(\xi h)),\xi \in[0,1],$ and consider 
the corresponding differential equation for $\gamma$ in $T_{c}M$:
\begin{equation}
\label{localequation}
\frac{d}{dt}\gamma(t)= \left(\phi_c^* F\right)(\gamma(t))=
\left( T_{\gamma (t)}\phi_{c}\right)^{-1}  \Omega \, \grad H \left(\phi_{c}(\gamma(t) ) \right).
\end{equation}
Notice that $\left( T_{\gamma}\phi_{c}\right)^{-1}=T_{U}\phi_{c}^{-1}$ where $U=\phi_c\circ \gamma$ and $T_{U(t)}\phi_{c}^{-1}:T_{U(t)}M\rightarrow T_cM$ for every $t$. We obtain
\begin{equation}
\label{localequationIntermidiate}
\frac{d}{d t}\gamma(t)=T_{U(t)}\phi_{c}^{-1} \Omega \,\left(T_{U(t)}\phi_{c}^{-1}  \right)^{\text{T}} \left(T_{\gamma(t)}\phi_{c}\right)^{\text{T}}\grad H \left(\phi_{c}(\gamma(t) ) \right) .
\end{equation}
Considering the Hamiltonian $\widetilde{H}:T_cM\rightarrow \mathbb{R}$, $\widetilde{H}(\gamma):=\phi_c^*H(\gamma)= H\circ\phi_c(\gamma)$, we can then rewrite \eqref{localequation} in the form 
\begin{equation}
\label{localequation1}
\frac{d}{dt}\gamma(t)=\widetilde{\Omega}(\gamma)\, \grad \widetilde{H} (\gamma),\quad \widetilde{\Omega}(\gamma):=
T_{U(t)}\phi_{c}^{-1} \Omega \,\left(T_{U(t)}\phi_{c}^{-1}  \right)^{\text{T}} ,
\end{equation}
where we have used that $\grad\widetilde{H}=T_{\gamma(t)}\phi_{c}^{\text{T}}\grad H\,(\phi_{c}(\gamma(t)) )$, which is now a gradient on the linear space $T_cM$ with respect to the metric inherited from $M$, $g|_c$. 
Locally in a neighborhood of $c$, \eqref{eq:collman1}-\eqref{eq:collman3} applied to \eqref{localequation1} coincides with the methods of  Cohen and Hairer, 
and therefore the order result \cite[Thm 4.3]{Cohen11} can be applied. Since the Riemannian distance $d(\cdot,\cdot)$ and any norm in local coordinates are equivalent, the result follows.
\end{proof}

\subsection{Global error}
We prove the following result for the global error in DRG methods.
\begin{theo}
Let $\uv(t)$ be the exact solution to (\ref{eq:ODE})
where $F$ is a complete vector field on a connected Riemannian manifold $(M,g)$ with flow $\uv(t)=\varphi_t(\uv^0)$.
Let $\psi_\h$ represent a numerical method $\uv^{\kk+1}=\psi_\h(\uv^\kk)$ whose local error can be bounded for some $p \in \mathbb{N}$ as
$$
   d(\psi_\h(\uv),\varphi_\h(\uv)) \leq C \h^{p+1}\quad\text{for all}\ \uv\in M.
$$
Suppose there is a constant $L$ such that
$$
 \| \nabla F \|_g \leq L,
$$
where $\nabla$ is the Levi-Civita connection and $\|\cdot\|_g$ is the operator norm with respect to the metric $g$.
Then the global error is bounded as
$$
d\left(\uv(\kk\h),\uv^\kk\right) \leq \frac{C}{L}(\e^{\kk\h L}-1) \h^p\quad\text{for all}\ \kk>0.
$$
\end{theo}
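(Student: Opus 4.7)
The strategy is to combine a local error propagation bound with a discrete Gronwall recursion, with the Riemannian structure entering only through one stability estimate for the exact flow. I would proceed in three steps.

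First, I would establish a Lipschitz-type contraction for the exact flow,
\[
d(\varphi_t(x),\varphi_t(y))\leq \e^{Lt}\,d(x,y),\qquad x,y\in M,\ t\geq 0.
\]
For any smooth curve $\gamma:[0,1]\to M$ joining $x$ and $y$, I would consider the variation $\Phi(t,s):=\varphi_t(\gamma(s))$ and its variation field $J(t,s):=\partial_s\Phi(t,s)$. Since the Levi--Civita connection is torsion-free and $[\partial_t,\partial_s]=0$, one has $\nabla_t J=\nabla_J F$; compatibility of $\nabla$ with $g$ together with the hypothesis $\|\nabla F\|_g\leq L$ then give $\tfrac{d}{dt}\|J\|_g\leq L\|J\|_g$, so that $\|J(t,s)\|_g\leq \e^{Lt}\|\gamma'(s)\|_g$. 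Integrating in $s$ bounds the length of $s\mapsto\varphi_t(\gamma(s))$ by $\e^{Lt}L(\gamma)$, and taking an infimum over $\gamma$ yields the claim.

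Second, writing $\uv(k\h)=\varphi_\h(\uv((k-1)\h))$ and $\uv^k=\psi_\h(\uv^{k-1})$, the triangle inequality together with step one and the local error hypothesis gives
\[
e_k:=d(\uv(k\h),\uv^k)\leq \e^{L\h}\,e_{k-1}+C\h^{p+1},\qquad e_0=0.
\]
Unrolling this linear recursion yields
\[
e_k\leq C\h^{p+1}\sum_{j=0}^{k-1}\e^{jL\h}=C\h^{p+1}\,\frac{\e^{kL\h}-1}{\e^{L\h}-1}\leq \frac{C}{L}(\e^{kL\h}-1)\,\h^p,
\]
where the last inequality uses $\e^{L\h}-1\geq L\h$; this is precisely the stated bound.

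The main obstacle is the contraction estimate of step one. In the Euclidean case one would simply integrate $F(x)-F(y)$ along the straight line between $x$ and $y$, but on a Riemannian manifold this segment must be replaced by a geodesic variation, and the relevant Lipschitz constant for $F$ is the operator norm of the covariant derivative $\nabla F$ rather than any pointwise size of $F$. Once the contraction is in hand, the telescoping and discrete Gronwall manipulations are entirely standard.
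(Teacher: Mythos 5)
Your proof is correct and follows the same skeleton as the paper's: decompose the error at step $k+1$ via the triangle inequality into a propagated term $d(\varphi_\h(\uv(\kk\h)),\varphi_\h(\uv^\kk))$ and a local term, bound the former by $\e^{L\h}e^\kk$, obtain the linear recursion $e^{\kk+1}\le \e^{L\h}e^\kk+C\h^{p+1}$, and sum the geometric series using $\e^{L\h}-1\ge L\h$. The one place you genuinely diverge is the flow stability estimate $d(\varphi_t(x),\varphi_t(y))\le \e^{Lt}d(x,y)$: the paper imports this as a Gr\"onwall-type inequality from Kunzinger et al., whereas you derive it from scratch by differentiating the variation field $J=\partial_s\varphi_t(\gamma(s))$ along the flow, using torsion-freeness of the Levi-Civita connection to get $\nabla_t J=\nabla_J F$, hence $\|J(t,\cdot)\|_g\le\e^{Lt}\|\gamma'\|_g$, and then bounding lengths and taking an infimum over joining curves. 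Logically nothing changes, but your version is self-contained and makes explicit where each hypothesis enters: completeness of $F$ ensures $\varphi_t$ is globally defined, connectedness ensures the infimum defining $d$ runs over a nonempty set of curves, and $\|\nabla F\|_g\le L$ plays the role of the Lipschitz constant. One small point of care in your step one: $\tfrac{d}{dt}\|J\|_g$ need not exist where $J$ vanishes, so differentiate $g(J,J)$ (or $\sqrt{g(J,J)+\epsilon}$) and conclude the same exponential bound; with that adjustment the argument is complete.
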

\begin{proof}
Denoting the global error as
$
e^{\kk} 	:= d(\uv(\kk\h),\uv^\kk),
$
the triangle inequality yields
$$
e^{\kk+1} \leq d\left(\varphi_\h(\uv(\kk\h)\right),\varphi_\h(\uv^\kk))+d\left(\varphi_\h(\uv^\kk),\psi_\h(\uv^\kk)\right).
$$
The first term is the error at $n\h$ propagated over one step, the second term is the local error.
For the first term, we find via a Gr\"onwall type inequality of \cite{kunzinger06gge},
$$
 d\left(\varphi_\h(\uv(\kk\h)),\varphi_\h(\uv^\kk)\right) \leq \e^{\h L} d\left(\uv(\kk\h),\uv^\kk\right)=\e^{\h L}e^\kk.
$$
Using the local error estimate for the second term, we get the recursion
$$
       e^{\kk+1}  \leq \e^{\h L}e^\kk + C \h^{p+1} ,
$$
which yields
$$
  e^\kk \leq C\frac{\e^{\kk\h L}-1}{\e^{\h L}-1} \h^{p+1} \leq \frac{C}{L}(\e^{\kk\h L}-1) \h^p.
$$
\end{proof}
\textbf{\textit{Remark:}} Following Theorem 1.4 in \cite{kunzinger06gge}, the condition that $F$ is complete can be relaxed if $\varphi_t(u^0)$ and $\{u^k\}_{k\in \mathbb{N}}$ lie in a relatively compact submanifold $N$ of $M$ containing all the geodesics from $u^k$ to $\varphi_{kh}(u^0)$. This is the case if, for instance, $H$ has compact, geodesically convex sublevel sets, since both $\varphi_t(u^0)$ and $\{u^k\}_{k\in \mathbb{N}}$ are restricted to the level set $M_{H(u^0)} = \{ p \in M \, | \, H(p) = H(u^0) \}$ and hence lie in the sublevel set $N_{H(u^0)} = \{ p \in M \, | \, H(p) \leq H(u^0) \}$.


\section{Examples and numerical results}
We test our methods on two different variants of the classical spin system, whose solution evolves on the $d$-fold product of two-spheres, $(S^2)^d$,
\begin{equation}\label{spinsystem1}
\frac{\mathrm{d}\s_i}{\mathrm{d}t} = s_i \times \frac{\partial H}{\partial s_i}, \quad s_i \in S^2, \quad i=1,\ldots,d, \quad H \in \mathcal{F}\left(\left(S^2\right)^d\right).
\end{equation}
The Riemannian metric $g$ on $(S^2)^d$ restricts to the so-called round metric on each copy of the sphere. This metric coincides with the Euclidean inner product on the tangent planes of each of the spheres.

Geometric integrators for such systems are discussed widely in the literature, see e.g. \cite{Frank97, Lewis03, McLachlan14, McLachlan17} and references therein. 
We study one or more bodies whose orientation is represented by a vector $s_i$ of unit length in $\mathbb{R}^3$, so that $s_i$ lies on the manifold $M = S^2 = \left\{ s \in \mathbb{R}^{3} : \left\| s \right\| = 1 \right\}$.
Here and in what follows, $\lVert \cdot \rVert$ denotes the $2$-norm. 
Starting with $d=1$, our choice of retraction $\phi$ is given by its restriction to $\p$,
\begin{equation}
\phi_\p(\x) = \frac{\p+\x}{\left\lVert \p+\x \right\rVert},
\label{eq:retraction}
\end{equation}
with the inverse
\begin{equation*}
\phi_\p^{-1}(\uv) = \frac{\uv}{\p^\mathrm{T}\uv}-\p
\end{equation*}
defined when $\p^\mathrm{T}\uv > 0$. We note that $\p^\mathrm{T}\x=0$ for all $\x \in T_\p S^2$.
The tangent map of the retraction and its inverse are given by 
\begin{align}
\dd_\x\phi_\p = \frac{1}{\left\lVert \p+\x \right\rVert}\left(I - \frac{(\p+\x) \otimes (\p+\x)}{\left\lVert \p+\x \right\rVert^2}\right), \qquad \dd_\uv\phi_\p^{-1} = \frac{1}{\p^\mathrm{T}\uv}\left(I - \frac{\uv \otimes \p}{\p^\mathrm{T}\uv}\right),
\label{eq:tangentmaps}
\end{align}
where $\otimes $ denotes the outer product\footnote{ If $x$ and $y$ are in $\mathbb{R}^3$, $x\otimes y$ is the matrix-matrix product of $x$ taken as a $3\times 1$ matrix and $y$ taken as a $1\times 3$ matrix.} of the vectors. For $d>1$, we use the retraction defined by $\Phi_p(x) = (\phi_{p_1}(x_i),\dots,\phi_{p_d}(x_d))$, where each $\phi_{p_i}(x_i)$ is given by (\ref{eq:retraction}).

\subsection{Example 1: Perturbed spinning top}
We consider first a nonlinear perturbation of a spinning top, see \cite{McLachlan14}. This is a spin system with one spin $\s$.
Given the inertia tensor $\mathbb{I} = \text{diag}(\mathbb{I}_1,\mathbb{I}_2,\mathbb{I}_3)$, and denoting by $\s^2$ the component-wise square of $\s$,  we can define the Hamiltonian as
\begin{align*}
H(\s) = \frac{1}{2}(\mathbb{I}^{-1}\s)^\mathrm{T}(\s+\frac{2}{3}\s^2).
\end{align*}
The ODE system 
can be written in the form 
\begin{align*}
\dfrac{\mathrm{d}\s}{\mathrm{d}t} &= \Omega(\s) \, \grad H(\s), \quad \Omega(\s) = \hat{s},
\end{align*}
using the hat operator defined by $\hat{s} y = s \times y$.
We approximate this system numerically, testing the scheme (\ref{eq:ddscheme1})-(\ref{eq:ddscheme2}) with different discrete Riemannian gradients: the AVF (\ref{eq:AVF}), the midpoint (\ref{eq:Gonzalez}), the Itoh--Abe (\ref{eq:ItohAbe}) and its symmetrized version (\ref{eq:SymItohAbe}).
For the three symmetric methods, we have chosen $c(\s,\sh) = \frac{\s+\sh}{\lVert \s+\sh\rVert}$, so that $\phi_\ce^{-1}(\sh)=-\phi_\ce^{-1}(\s)$.
Using that $\grad H(\s) = \mathbb{I}^{-1} (\s+\s^2)$ and considering the transpose of $T_{\gamma_\xi}\phi_\ce$ from ({\ref{eq:tangentmaps}), the AVF DRG becomes
\begin{align*}
\overline{\grad}_\text{AVF} H(\s,\sh) &= \int_0^1 \frac{1}{\lVert l_{\xi}\rVert}\left(I - \frac{l_{\xi}\otimes l_{\xi}}{\lVert l_{\xi} \rVert^2}\right) \mathbb{I}^{-1}(\phi_\ce(\gamma_\xi)+\phi_\ce(\gamma_\xi)^2) \, \mathrm{d}\xi\\
&= \int_0^1 \frac{1}{\lVert l_{\xi} \rVert}\left(\mathbb{I}^{-1}\left(\phi_\ce(\gamma_\xi)+\phi_\ce(\gamma_\xi)^2 \right) - \phi_\ce(\gamma_\xi)^\mathrm{T}\mathbb{I}^{-1}\left(\phi_\ce(\gamma_\xi)+\phi_\ce(\gamma_\xi)^2\right)\phi_\ce(\gamma_\xi)\right) \, \mathrm{d}\xi,
\end{align*}
with $\gamma_\xi = (1-\xi)\phi_\ce^{-1}(\s) + \xi \phi_\ce^{-1}(\sh) = (1-2 \xi)\phi_\ce^{-1}(\s)$ and $l_{\xi} = c + \gamma_{\xi}$.
Similarly, the midpoint DRG becomes
\begin{align*}
\overline{\grad}_\text{MP} H(\s,\sh) &= 
\frac{1}{\lVert \s+\sh \rVert}\left(\mathbb{I}^{-1}\left(\s+\sh+\frac{2}{3}\left(\s^2 + \s \sh + \sh^2 \right)\right) + \frac{\frac{1}{2}\lVert \s+\sh \rVert^2 - 2}{\lVert \sh-\s\rVert^2} \left(H(\sh)-H(\s)\right)(\sh-\s)\right),
\end{align*}
where we have used that $g(\s,\s) = \s^\mathrm{T}\s = 1$ for all $\s \in S^2$.
To obtain the basis of $T_c M$ for the definition of the Itoh--Abe DRG, we have used the singular-value decomposition. 
For the first order scheme, noting that $\phi_\s^{-1}(\s)=0$, we choose  $c(\s,\sh) = \s$, and get $\alpha_j = \phi_{\s}(\sh)^\mathrm{T} E_j$, for $j=1,2$. Then the DRG (\ref{eq:ItohAbe}) can be written as 
\begin{align}
\overline{\grad}_\text{IA}H(\s,\sh) =& \frac{H\left(\phi_\s \left( \phi_{\s}^{-1}(\sh)^\mathrm{T} E_1  E_1\right)\right)- H(\s)}{ \phi_{\s}^{-1}(\sh)^\mathrm{T} E_1 } E_1 
+ \frac{H(\sh) - H\left(\phi_\s\left( \phi^{-1}_{\s}(\sh)^\mathrm{T} E_1 E_1\right)\right)}{ \phi_{\s}^{-1}(\sh)^\mathrm{T} E_2 } E_2.
\label{eq:IAexample}
\end{align}

We solve the same problem using the 4th, 6th and 8th order variants of the collocation-like scheme (\ref{eq:collman1})-(\ref{eq:collman3}). Choosing in the 4th order case the Gaussian nodes $\cc_{1,2} = \frac{1}{2}\mp\frac{\sqrt{3}}{6}$ as collocation points and setting $c(\s,\sh) = \s$, we get the nonlinear system
\begin{align*}
S_1 &= h \, \phi_{\s_0}\left( \frac{1}{2} \, T_{S_1}\phi_{\s_0}^{-1}\left(\Omega_1 \, \grad_1 H \right) + \left(\frac{1}{2}-\frac{\sqrt{3}}{3}\right) \, T_{S_2}\phi_{\s_0}^{-1}\left(\Omega_2 \, \grad_2 H \right) \right),\\
S_2 &= h \, \phi_{\s_0}\left( \left(\frac{1}{2}+\frac{\sqrt{3}}{3}\right) \, T_{S_1}\phi_{\s_0}^{-1}\left(\Omega_1 \, \grad_1 H \right) + \frac{1}{2} \, T_{S_2}\phi_{\s_0}^{-1}\left(\Omega_2 \, \grad_2 H \right) \right),\\
s_1 &= h \, \phi_{\s_0}\left( T_{S_1}\phi_{\s_0}^{-1}\left(\Omega_1 \, \grad_1 H\right) +  T_{S_2}\phi_{\s_0}^{-1}\left(\Omega_2 \, \grad_2 H \right) \right),
\end{align*}
where 
\begin{align*}
\sigma(\xi h) = \left( \left(3+2\sqrt{3}\right)\phi_{\s_0}^{-1}(S_{1}) + \left(3-2\sqrt{3} \right)\phi_{\s_0}^{-1}(S_{2})\right)  \xi + \left(3\left(\sqrt{3}-1 \right)\phi_{\s_0}^{-1}(S_{2}) -3 \left(1+\sqrt{3} \right) \phi_{\s_0}^{-1}(S_{1}) \right) \xi^2
\end{align*}
and we use the transposes of (\ref{eq:tangentmaps}) and $\grad H (s) = \mathbb{I}^{-1} (\s+\s^2)$ in the evaluation of $\grad_1 H$ and $\grad_2 H$. The 6th and 8th order schemes are derived in a similar manner, using the standard Gaussian nodes.

A second order scheme is derived by composing the Itoh--Abe DRG method with its adjoint, and a 4th order scheme is obtained by composing this method again with itself, as well as one by composition of the symmetrized Itoh--Abe DRG method with itself. In all stages of these composition methods, a symmetric $c(u,v)$ is used.

Plots confirming the order of all methods can be seen in Figure \ref{fig:orderplots}, where solutions using the different schemes are compared to a reference solution obtained using a very small step size.
See the left hand panel of Figure \ref{fig:sphere} for numerical confirmation that our methods do indeed preserve the energy to machine precision, while the implicit midpoint method does not. 
In the right hand panel of Figure \ref{fig:sphere}, the solution obtained by the Itoh--Abe DRG scheme with a step size $h=1$ is plotted together with a solution obtained using the symmetrized Itoh--Abe DRG method with a much smaller time step. We observe, as expected for a method that conserves both the energy and the angular momentum, that the solution stays on the trajectories of the exact solution, although not necessarily at the right place on the trajectory at any given time.

\begin{figure}
        \centering
        \subfloat{
                \centering
                \includegraphics[width=0.46\textwidth]{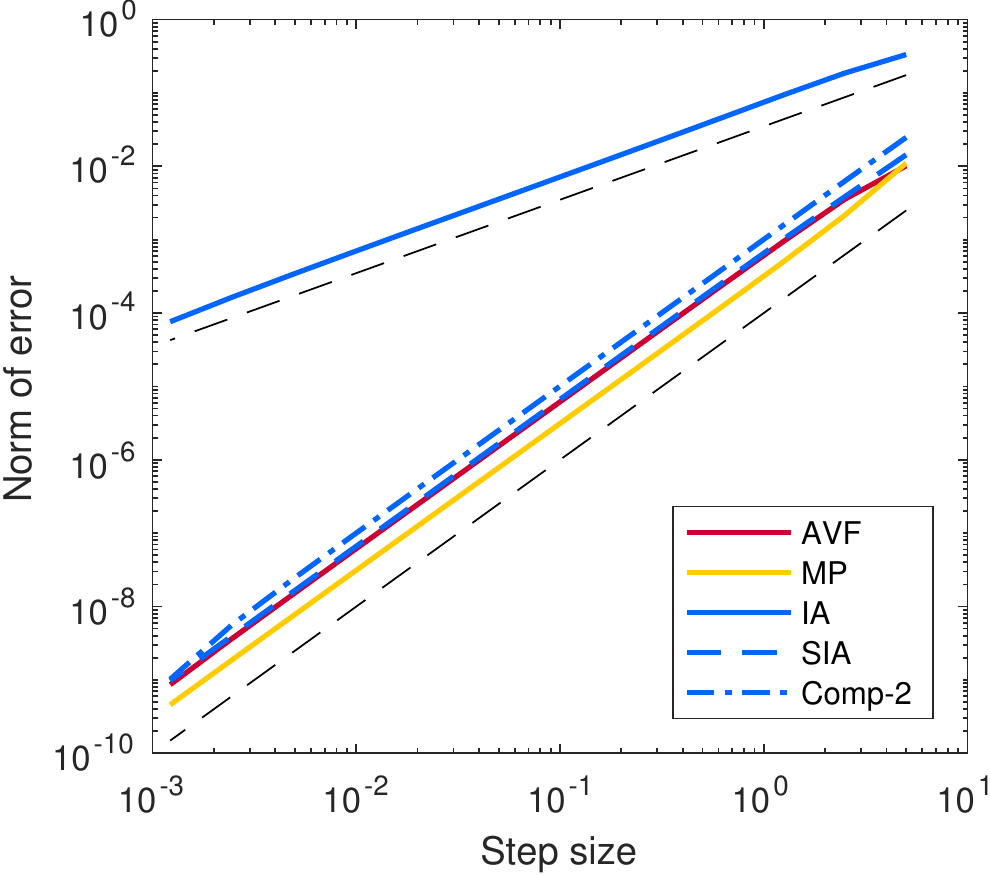}
        }\quad
        \subfloat{
                \centering
                \includegraphics[width=0.46\textwidth]{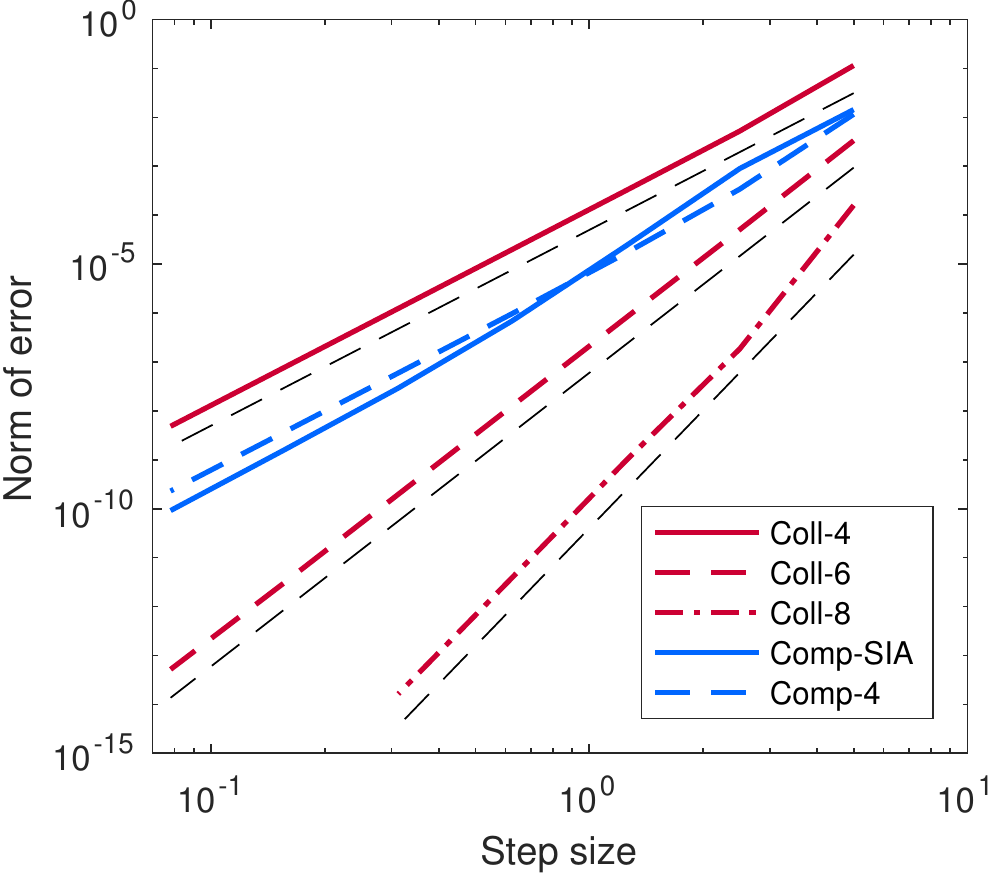}
        }
        \caption{Error norm at $t=10$ for the perturbed spinning top problem solved with different schemes, plotted with black, dashed reference lines of order 1, 2, 4, 6 and 8. Initial condition ${\s = (-1,-1,1)/\sqrt{3}}$ and $\mathbb{I} = \text{diag}(1,2,4)$. \textit{Left:} The AVF, midpoint (MP), Itoh--Abe (IA) and symmetrized Itoh--Abe (SIA) DRGs and a 3-stage composition of the IA DRG scheme (Comp-2). \textit{Right:} Collocation-type schemes of order 4, 6 and 8, a 3-stage composition of the SIA DRG scheme (Comp-SIA), and a 6-stage composition of the IA DRG scheme (Comp-4).}
        \label{fig:orderplots}
\end{figure}

\begin{figure}
        \centering
        \subfloat{
                \centering
                \includegraphics[width=0.46\textwidth]{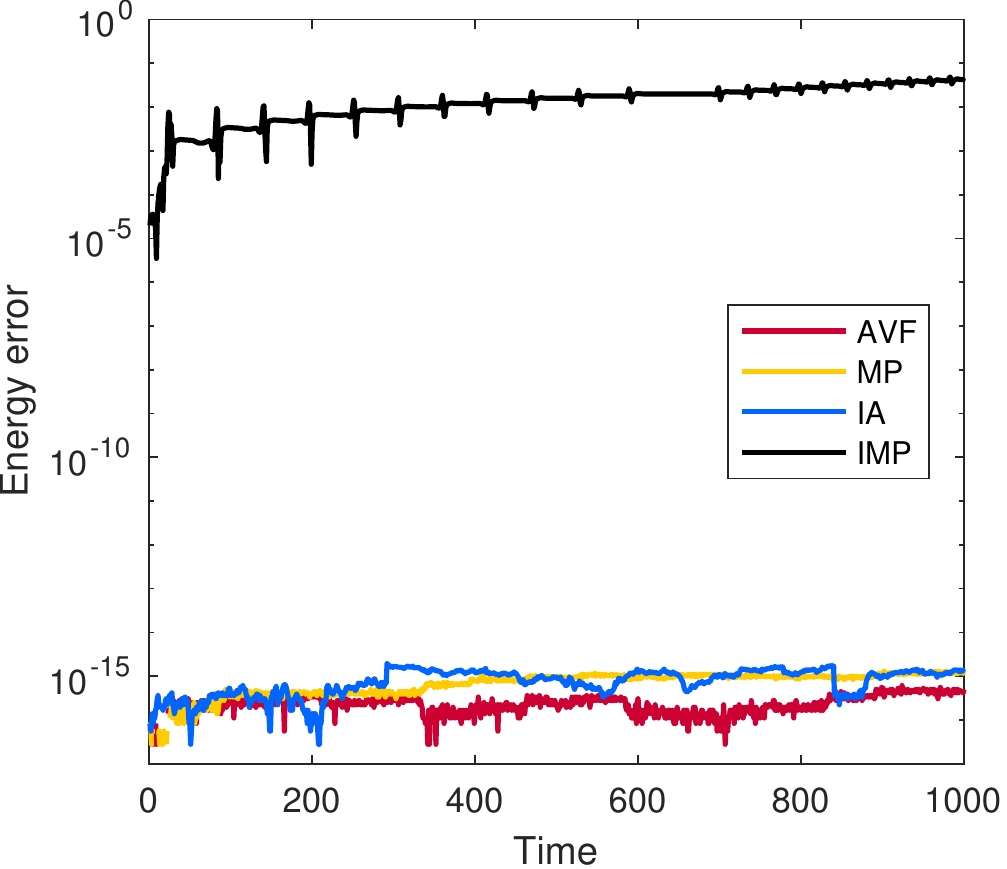}
        }\quad
        \subfloat{
                \centering
                \includegraphics[width=0.46\textwidth]{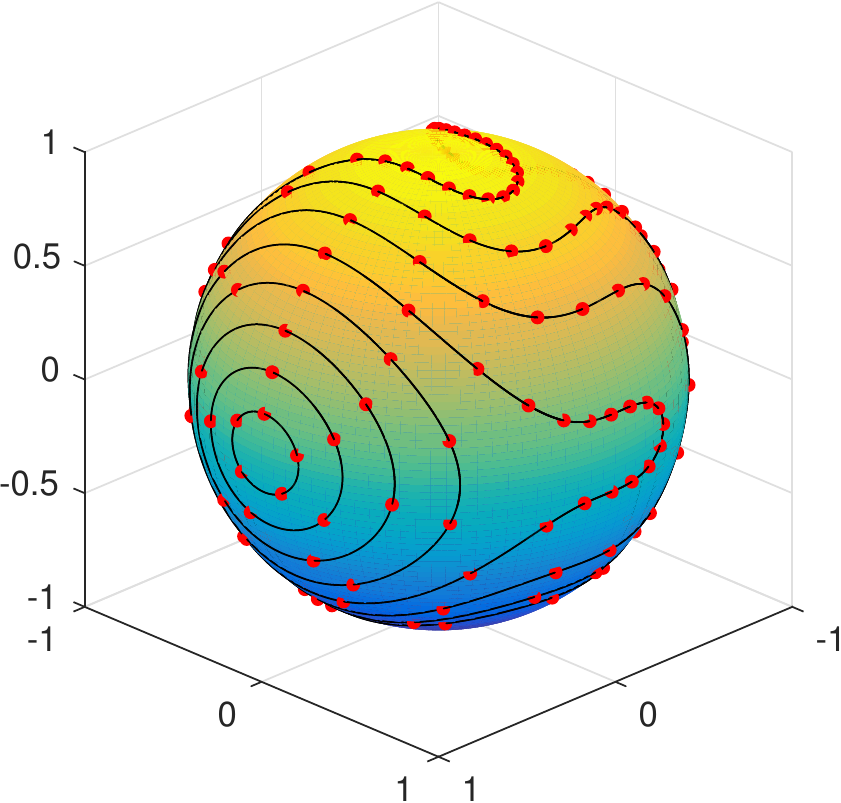}
        }
        \caption{\textit{Left:} Energy error with increasing time for the AVF, midpoint (MP) and Itoh--Abe (IA) DRG methods, as well as the implicit midpoint (IMP) method, with step size $h=1$, initial condition ${\s = (-1,-1,1)/\sqrt{3}}$ and $\mathbb{I} = \text{diag}(1,2,4)$. \textit{Right:} Curves of constant energy on the sphere, found by our method with different starting values. The black solid line is the solution using the symmetrized Itoh--Abe DRG method with step size $h=0.01$, while the red dots are the solutions obtained by the Itoh--Abe DRG method with step size $h=1$.}
        \label{fig:sphere}
\end{figure}

\subsection{Example 2: Heisenberg spin chain}
We now consider the Heisenberg spin chain of micromagnetics. This problem is considered in \cite{Frank97, McLachlan17}, where different geometric integrators are tested. Here, $\s \in \left(S^2 \right)^d$, and the Hamiltonian is
\begin{align}
H(\s) = \sum_{i=1}^d {s_i}^\mathrm{T}  s_{i-1},
\label{eq:spinchainhamiltonian}
\end{align}
with  $\s_0 = \s_d$ and $\s_{d+1} = \s_1$.
The system \eqref{spinsystem1} becomes, for this Hamiltonian, 
\begin{align*}
\dfrac{\mathrm{d}\s_i}{\mathrm{d}t} = \hat{s}_i \,\left(\s_{i-1}+\s_{i+1}\right), \quad i = 1,\ldots,d,
\end{align*}
and can be written in the block form
\begin{align}
\dfrac{\mathrm{d}\s}{\mathrm{d}t} = \Omega(\s) \, \grad H(s), \quad \mathrm{where} \quad \Omega(\s)  = \mathrm{diag}(\hat{s}_1,\dots , \hat{s}_d).
\label{eq:blockform}
\end{align}

For such a $d$-particle system, we may write the DRGs as
$$\overline{\grad} H(\s,\sh) = \left(\overline{\grad}^1 H(\s,\sh), \ldots, \overline{\grad}^d H(\s,\sh)\right),$$
where we note that $\overline{\grad}^i H(\s,\sh)$ is a discrete approximation to $\frac{\partial H}{\partial s_i}$. We thus get the AVF DRG defined by
\begin{align*}
\overline{\grad}^i_\mathrm{AVF} H(\s,\sh) =&\int_0^1 \! \frac{1}{\lVert l_{i,\xi}  \rVert}\left(I - \frac{l_{i,\xi} \otimes l_{i,\xi} }{\lVert l_{i,\xi} \rVert^2}\right) \left(\phi_{\ce_{i-1}}(\gamma_{i-1,\xi})+\phi_{\ce_{i+1}}(\gamma_{i+1,\xi})\right) \mathrm{d}\xi \\
=& \int_0^1 \! \frac{1}{\lVert l_{i,\xi}  \rVert}\left(\phi_{\ce_{i-1}}(\gamma_{i-1,\xi})+\phi_{\ce_{i+1}}(\gamma_{i+1,\xi}) - l_{i,\xi}^\mathrm{T} \left(\phi_{\ce_{i-1}}(\gamma_{i-1,\xi}) +\phi_{\ce_{i+1}}(\gamma_{i+1,\xi})\right)l_{i,\xi} \right)  \mathrm{d}\xi,
\end{align*}
with $\gamma_{i,\xi} = (1-2 \xi)\phi_{\ce_i}^{-1}(\s_i)$ and $l_{i,\xi} = \ce_i+\gamma_{i,\xi}$.
For the midpoint DRG we get
\begin{align*}
\overline{\grad}^i_\mathrm{MP} H(\s,\sh) = \ce_{i-1} + \ce_{i+1} + \frac{H(\sh) - H(\s) - (\grad H(\ce(\s,\sh))^\mathrm{T}\eta}{{\eta}^\mathrm{T}\eta}\eta_i,
\end{align*}
where $\eta = (\eta_1,\ldots,\eta_d)$ and $\eta_i = -2 \phi_{\ce_i}^{-1}(s_i).$ In the numerical experiments, however, we have used a small modification of this,
\begin{align*}
\overline{\grad}^i_\mathrm{MMP} H(\s,\sh) = \ce_{i-1} + \ce_{i+1} + \frac{{\sh_i}^\mathrm{T}\sh_{i-1} - {\s_{i}}^\mathrm{T}\s_{i-1} - (\ce_{i-1}+\ce_{i+1})^\mathrm{T}\eta_i}{{\eta_i}^\mathrm{T}\eta_i}\eta_i.
\end{align*}
This DRG, which does indeed satisfy (\ref{eq:discdiff1})-(\ref{eq:discdiff2}), leads to a more computationally efficient scheme than the original midpoint DRG.
Each $\overline{\grad}^i_\mathrm{IA} H(\s,\sh)$ in the Itoh--Abe DRG is found as in the previous example, by (\ref{eq:IAexample}). Higher order schemes are also derived in the same manner as before.

We test our schemes by comparing the numerical solutions with the exact solution
\begin{align*}
s_j(t) = (a \cos \theta_j + \widetilde{a} \sin \theta_j) \cos \phi + \bar{a} \sin \phi, \quad \theta_j = j p - 2(1-\cos p)\sin \phi,
\end{align*}
for a choice of constants $\phi, p \in \mathbb{R}$ and orthogonal unit vectors $a,\widetilde{a},\bar{a} \in \mathbb{R}^3$, see \cite{Frank97}. Order plots for the methods are provided in Figure \ref{fig:spin_orderplots}, using $d=5$, $\phi=\pi/3$, $p = 2 \pi/d$, $a = (1,2,-1)/\sqrt{6}$, $\widetilde{a} = (2,1,4)/\sqrt{21}$ and $\bar{a} = a \times \widetilde{a}.$ All schemes are shown to have the expected order.

\begin{figure}
        \centering
        \subfloat{
                \centering
                \includegraphics[width=0.46\textwidth]{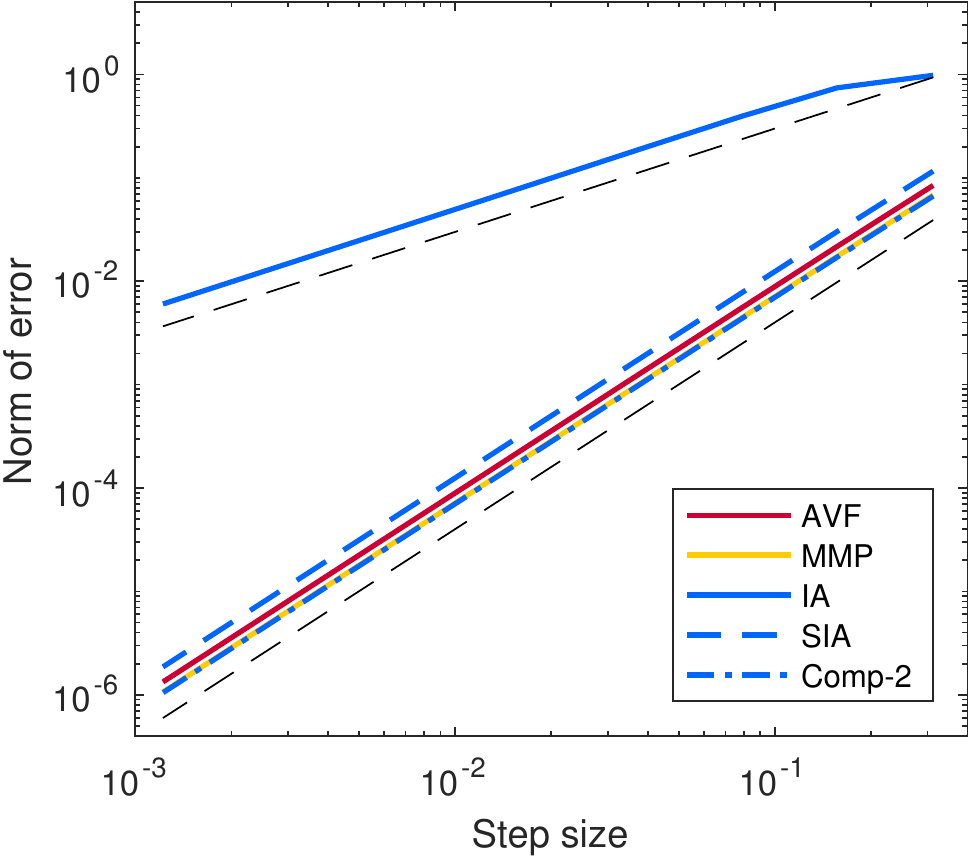}
        }\quad
        \subfloat{
                \centering
                \includegraphics[width=0.46\textwidth]{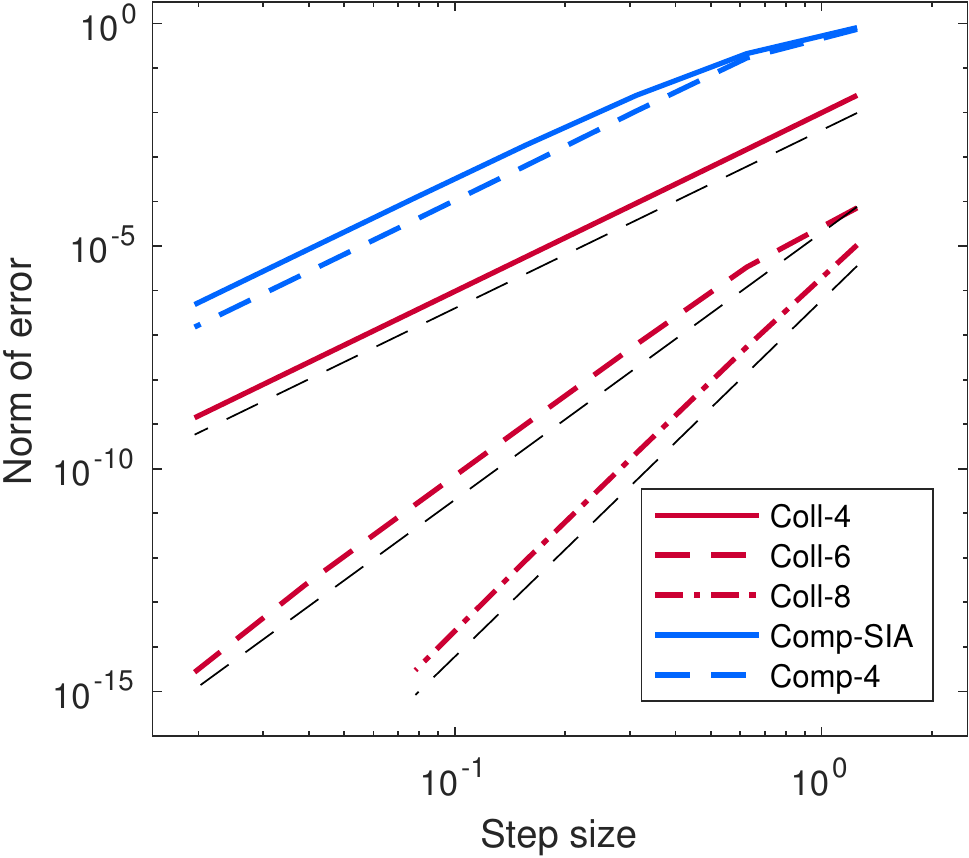}
        }
        \caption{Error norm at $t=10$ for the Heisenberg spin chain problem solved with different schemes, plotted with black, dashed reference lines of order 1, 2, 4, 6 and 8. \textit{Left:} The AVF, modified midpoint (MMP), Itoh--Abe (IA) and symmetrized Itoh--Abe (SIA) DRGs and a 3-stage composition of the IA DRG scheme (Comp-2). \textit{Right:} Collocation-type schemes of order 4, 6 and 8, a 3-stage composition of the SIA DRG scheme (Comp-SIA), and a 6-stage composition of the IA DRG scheme (Comp-4).}
        \label{fig:spin_orderplots}
\end{figure}

\section{Conclusions and further work}
We have presented a general framework for constructing energy preserving numerical integrators on Riemannian manifolds.
The main tool is to generalize the notion of discrete gradients as known from the literature. The new methods make use of an approximation to the Riemannian gradient coined the discrete Riemannian gradient, as well as a retraction map and a coordinate center function. An appealing feature of the new methods is that they do not depend on a particular choice of local coordinates or on an embedding of the manifold into a (larger) Euclidean space, but are of an intrinsic nature. Particular examples of discrete Riemannian gradient methods are given as generalizations of well-known schemes, such as the average vector field method, the midpoint discrete gradient method and the Itoh--Abe method. Extensions to higher order are proposed via a collocation-like method. We have analysed the local and global error behaviour of the methods, and they have been implemented and tested for certain spin systems where the phase space is $\left(S^2 \right)^d$. 

Possible directions for future research include a more detailed study of the stability and propagation of errors, taking into account particular features of the Riemannian manifold; for instance, it may be expected that the sectional curvature will play an important role. More examples should also be tried out, and we belive, inspired by \cite{Celledoni18}, that there is a potential for making our implementations more efficient by tailoring them for the particular manifold, as well as the ODE problem considered.

\bibliography{bibl}
\bibliographystyle{ieeetr}

\end{document}